\theoremstyle{plain}
\newtheorem{theorem}{Theorem}
\newtheorem{definition}[theorem]{Definition}
\newtheorem{proposition}[theorem]{Proposition}
\newtheorem{example}[theorem]{Example}
\newtheorem{remark}[theorem]{Remark}
\newtheorem{assumption}[theorem]{Assumption}
\newtheorem{question}[theorem]{Question}
\newcounter{mycount}
\newenvironment{romlist}{\begin{list}{\rm(\roman{mycount})}%
   {\usecounter{mycount}\labelwidth=1cm\itemsep 0pt}}{\end{list}}
\newenvironment{numlist}{\begin{list}{\arabic{mycount}.}%
   {\usecounter{mycount}\labelwidth=1cm\itemsep 0pt}}{\end{list}}
\newenvironment{letlist}{\begin{list}{\rm(\alph{mycount})}%
   {\usecounter{mycount}\labelwidth=1cm\itemsep 0pt}}{\end{list}}
\numberwithin{equation}{section}
\numberwithin{theorem}{section}
\numberwithin{figure}{section}
\newcommand\RR{{\mathbb R}}
\newcommand\PP{{\mathbb P}}
\newcommand\qq{\qquad}
\newcommand\q{\quad}
\newcommand\si{\sigma}
\newcommand\be{\beta}
\newcommand\al{\alpha}
\newcommand\Si{\Sigma}
\newcommand\g{\gamma}
\newcommand\De{\Delta}
\newcommand\ol{\overline}
\renewcommand\o{{\mathrm o}}
\newcommand\ZZ{{\mathbb Z}}
\newcommand\EE{{\mathbb E}}
\newcommand\eps{\epsilon}
\newcommand\resp{respectively}
\newcommand\oo{\infty}
\newcommand\TT{{\mathbb T}}
\newcommand\de{\delta}
\renewcommand\ell{l}
\newcommand\var{\mathrm{var}}
\newcommand\SZ{S\l omczy\'nski and {\. Z}yczkowski}
\newcommand\bc{\beta_{\text{\rm c}}}
\newcommand\Nmax{N_{\text{\rm max}}}
\newcommand\Nmin{N_{\text{\rm min}}}
\title[On influence and compromise]{On influence and compromise\\
in two-tier voting systems}
\author{Geoffrey R.\ Grimmett}
\address{Statistical Laboratory, Centre for
Mathematical Sciences, Cambridge University, Wilberforce Road,
Cambridge CB3 0WB, UK} 
\address{School of Mathematics \&\ Statistics, The University of Melbourne, 
Parkville, VIC 3010, Australia}
\email{g.r.grimmett@statslab.cam.ac.uk}
\urladdr{\url{http://www.statslab.cam.ac.uk/~grg/}}
\begin{document}

\begin{abstract}
We examine two aspects of the mathematical basis for two-tier voting systems, such as that of
the Council of the European Union. These aspects concern the use of square-root weights and the choice of quota.

Square-root weights originate in the Penrose square-root system, which assumes
that votes are cast independently and uniformly at random, and is based
around the concept of equality of influence of the voters across the Union. There
are (at least) two distinct definitions of influence in current use in probability theory, namely,
\emph{absolute} and \emph{conditional influence}. These are in agreement 
when the underlying random variables are independent, but
not generally. We
review their possible implications for two-tier voting systems, 
especially in the context of the so-called collective bias model. We show
that the two square-root laws invoked by Penrose are unified through the use of 
conditional influence.

In an elaboration of the square-root system, \SZ\ have proposed an exact
value for the quota $q=q^*$ to be achieved in a successful vote of a two-tier system,
and they have presented numerical and theoretical evidence in its support.  We indicate
some numerical and mathematical issues arising in the use of a Gaussian (or normal) approximation 
in this context, and we propose that other values of $q$ may be as good if not better than $q^*$.
We discuss certain aspects of the relationship between
 theoreticians and politicians in the design
of a two-tier voting system, and we reach the conclusion that the choice of 
quota in the square-root system is an issue for politicians informed by theory.
\end{abstract}

\date{21 March 2018, revised 14 May 2019}
\keywords{Two-tier voting system, European Union, Council, influence, power, pivotality, collective bias, 
Jagiellonian Compromise, Berry--Esseen bound.}
\subjclass[2010]{60F05, 91B12}

\maketitle

\section{Introduction and background}\label{sec:intro}

\subsection{Preamble}
Mathematics is fundamental to the design and analysis of voting systems
(see, for example, the books \cite{BY1982,Birk11, FM, LV, Puk2017}). 
Mathematical models for human behaviour frequently involve probability, and they
invariably rely upon assumptions whose validity is ripe for debate. As a general rule,
the greater are the assumptions, the more precise are the conclusions. 
A balance needs be struck between tractability and applicability: 
excessive assumptions tend to undermine practical relevance, whereas \lq\lq nothing will come of nothing"
[Shakespeare].  
Assumption-based conclusions must be exposed to a robustness analysis: to what degree are they
robust when the underlying assumptions are perturbed? 

These issues are illustrated here in a study of the so-called Jagiellonian Compromise (JagCom)
of \SZ\ \cite{SZ04,SZ06,SZ07,SZ11}. The JagCom is a proposal for a two-tier voting system such as 
that of the Council of the European Union (see Section \ref{ssec13} for further details).
It is based on (i) \emph{square-root weights} coupled with (ii) a certain formula for the \emph{quota}.
While the current work was born out of an interest in learning about the JagCom, it has developed
into (i) a broader study of the notion of power (or influence) for general probability distributions, 
combined with (ii) a critical analysis of the arguments leading to the given quota.    
The conclusions of this article illuminate the balance
between theory and practical relevance.  

The political context of this paper is as follows. The debate rolls on concerning 
the allocation of seats in the European Parliament (EP) 
between the Member States of the EU (see, for example,  \cite{Cart,PukG17}).
It has been argued by members of the Constitutional Affairs Committee (AFCO) of the EP that strategic
reform of the somewhat \emph{ad hoc} method of allocation of parliamentary seats should be considered  only in parallel 
to a review of the two-tier voting system of the EU Council. 
The JagCom is a leading theoretical contender for implementation in the Council.
It has been supported in two open letters to EU governments, \cite{openl2,openl1}, signed by significant 
numbers of prominent theoreticians, and it has been the subject of a volume of
positive publicity including \cite{KSZ07,Mach07,comp,SZ10,ZSZ}. 
The discussion in Brussels is likely to intensify in the months and years to come,
and this is a propitious moment to re-examine the JagCom in some detail.

There are two principal parts to this paper, as outlined in the following two subsections.  
The first concerns the definition of the \lq power' of an individual (as introduced by Penrose) 
for general probability distributions. This is connected to the choice of weights in a two-tier voting
system such as the JagCom.  The second is a discussion of the choice of quota in the JagCom.

\subsection{Power and influence}\label{sec:1.2}
Lionel Penrose's 1946 paper \cite{Pen} is a fundamental work in the mathematical theory of voting, 
and it has received a great deal of attention. Penrose found it convenient to assume that members
of a population choose their votes \emph{independently} at random, and are 
\emph{equally likely} to choose either of the two possible outcomes. 
These assumptions of independence and unbiasedness lead to a mathematically sophisticated
theory based around the classical study of the sums of so-called independent and identically
distributed (\lq iid'), symmetric random variables (see \cite{Pet1975},
or the less sophisticated account \cite[Sect.\ 5.10]{GS01}). 
That said, independence and unbiasedness may, in practice,  be far from the truth
in specific cases.

The square-root voting system of Penrose \cite{Pen} is prominent in
discussion of two-tier voting systems in general, 
and in specific of that of the Council of the European Union
(see, for example, \cite{Kir16,SZ11}).
The challenge confronted by Penrose is to devise a system for pooling the views of a number of Member States with varying 
population sizes. What weight $w_j$ should be assigned
to the opinion of State $j$, having a population of size $N_j$? 
The Penrose system amounts to the proposal $w_j \propto \sqrt{N_j}$.
The essence of Penrose's argument is the observation that the number $H$ of heads shown 
in $N_j$ independent, unbiased coin tosses satisfies 
\begin{equation}\label{1}
\EE\left|H-\tfrac12 N_j\right| \sim \sqrt {2N_j/\pi}, \qq \text{for large } N_j.
\end{equation}
(Here and later,  $\EE$ denotes expectation, and $\PP$ denotes probability.)
We shall refer to \eqref{1} as \emph{Penrose's second square-root law}.
The reader is referred back to \cite{Pen} for the deduction of square-root weights from \eqref{1},
although s/he may prefer to read Kirsch's least-squares argument as presented in Section \ref{sec:3.2}.
(Although \eqref{1}, and the subsequent \eqref{4}, are \emph{asymptotic} relations, sharp bounds
may be obtained by elementary methods.)

\begin{remark}\label{rem12}
Penrose defines the \lq edge' as $|N_F-N_A|$, where $N_F$ (\resp, $N_A$) is
the number of votes in favour (\resp, against) the motion. It is immediate
that $|N_F-N_A|=|H-(N_j-H)| = |2H-N_j|$, so that the mean edge equals
$2 \EE|H-\frac12 N_j|$.
\end{remark}

Penrose \cite{Pen} discussed also the concept of the  \lq power' 
(termed \lq influence' in the current work, after \cite{BOL,Ru81}) 
of an individual voter within a given election or vote.
He noted that, in a vote within a State containing $N_j$ individuals whose votes are independent
and unbiased, this 
power, denoted $\al_j$, has order 
\begin{equation}\label{4}
\al_j \sim \sqrt{2/(\pi N_j)}, \qq \text{for large } N_j,
\end{equation}
(see also Banzhaf \cite{Bz}). 
We shall refer to \eqref{4} as \emph{Penrose's first square-root law}.

Only one square-root is sometimes attributed to Penrose. In \cite{Pen}, he stated \eqref{1} and 
he proved \eqref{4},
and he did not note their inter-relationship. Some later authors
have linked \eqref{1} and \eqref{4} by proposing  a weight $w_j'$ for State $j$
such that the product $\al_j w_j'$
does not depend on population-size, that is, $w_j' \propto \sqrt{N_j}$, in agreement with \eqref{1}
(see, for example, the discussion at \cite[p.\ 48 ff.]{SZ10} and \cite[Sect.\ 1]{SZ11}).
This argument appears to assume that: (i) in a population with size $N_j$ and individual
power $\al_j$, the collective power is $\al_j N_j$, and (ii) $1/\al_j$ has, generically, the same order
as $\EE|H-\tfrac12 N_j|$. The first assumption here is open to discussion, and the second
is false for general distributions. Proposition \ref{prop1} and Remark \ref{rem33} explain the 
true relationship between \eqref{1} and \eqref{4} in the context of general 
probability distributions.

The square-root laws of this article are those of Penrose \cite{Pen}. It is not the current purpose to
discuss in detail the relationship between voting power and voting weight (see, for example,
\cite{AE,FM,Napel-ox}).  

In Section \ref{sec:inf}, we introduce the notions of the \emph{absolute} and the \emph{conditional influences}
of an individual in an election. The absolute influence is that considered by Penrose and later authors;
the conditional influence is sometimes considered more appropriate in situations where 
individuals' votes are \emph{dependent} random variables. The two quantities are equal
in the independent case, but not generally so.

\subsection{The Jagiellonian Compromise, a two-tier voting system}\label{ssec13}
In a method since dubbed the \lq Jagiellonian Compromise' (JagCom), \SZ\ \cite{SZ04,SZ06,SZ07,SZ11}
have proposed the following two-tier voting system using square-root weights 
together with a  particular value $q^*$ for the quota $q$.
Writing $N_1,N_2,\dots,N_s$ for the populations of the $s$ States of the Union, 
under the JagCom a motion is passed if and only if
\begin{equation}\label{2}
\sum_{j\in J} \sqrt{N_j} - \sum_{j\in \ol J} \sqrt{N_j} > q^*W,
\end{equation}
where $J$ is the set of States voting in favour of the motion,  $\ol J$ is the set voting against, and
\begin{equation*}\label{2a}
q^*:= \frac{\sqrt N}W,\q  W= \sum_{j=1}^s \sqrt {N_j},
\q N=\sum_{j=1}^s N_j.
\end{equation*}
The value $q=q^*$ given in \eqref{2} is supported by a 
heuristic argument based on approximation
by a Gaussian distribution. Although no rigorous justification of this approximation is
yet available (see Appendix \ref{sec:4.2} of the current work), its conclusions gain 
some support using exact numerical methods (see Section \ref{sec:43}).
Note that the $q^*$ of \eqref{2a}  is not quite the quota of \cite{SZ07}, but rather that
of \cite{Kir07}.

Salient features of two-tier voting systems are summarised in Section \ref{sec:ttv}, with 
special attention to the work of Kirsch \cite{Kir07,Kir16} and \SZ\ 
\cite{SZ04,SZ06,SZ07,SZ11}. This is followed by
a discussion in Section \ref{sec:sinf} and Appendix \ref{sec:4.2} of the 
influences of the weighted States within the Council, and of the use 
and potential misuse of the Gaussian approximation
in estimating certain related probabilities.
The closing Section \ref{sec:rems} contains some reflections on the JagCom,
and in particular the following conclusions.
\begin{numlist}
\item The JagCom is derived via a set of principles that can be stated
unambiguously and analysed rigorously, and the resulting system is
robust with respect to population changes. Nevertheless,
these principles are arguably fragile and unrealistic, and insufficiently
sensitive to political realities.

\item Despite  fragility in the assumptions about voting patterns 
made by the JagCom, 
we offer no superior alternative here. The
problem of allocating weights is more than just a mathematical puzzle,
but demands a more extensive political vision. 

\item  The justification for the proposed JagCom quota $q^*$ is empirical and numerical rather
than rigorous. These numerics provide only equivocal guidance which
is satisfied by a range of values of the quota, including the simpler value $q=0$. 
Indeed, $q=0$ is the value for which individual powers are maximized. 
Given the very modest differences in performance between such values of the quota, 
the final choice is best determined through \emph{political} input.
\end{numlist}

\subsection{Resum\'e}
Certain assumptions appear to be necessary for the above analyses, and these are examined 
in the current article. There are four areas that receive special attention, namely:
\begin{letlist}
\item  the underlying model in which individuals vote according to an unbiased coin toss, 
independently of other voters [Section \ref{sec:23}],

\item an alternative interpretation of the concept of \lq  voting power' or \lq influence'
[Section \ref{sec:22}],

\item  the assumptions  of mathematical smoothness under which the Gaussian approximation is
suitable for finite populations [Appendix \ref{sec:4.2}],

\item some implications of exact computations of voting powers in two-tier systems
[Section \ref{sec:43}].
\end{letlist}
Numerous earlier authors have of course considered some of these issues, namely (a),
(c), and (d), and we mention \cite{Kir16, KLMT, KN, LM04, ER, SZ07}.

\begin{remark}[on the literature]
There is an extensive existing literature on the matters considered in this article, and the author
has attempted to include appropriate references.
Apologies are offered to those authors whose work is not listed explicitly. 
\end{remark}

\section{Absolute and conditional influence}\label{sec:inf}

\subsection{The history and context of influence}
The concept of \lq influence' is central in the probability theory of disordered systems. 
Consider a system that comprises $m$ sub-systems. These could be, for example, individual voters in an election,
nodes in a disordered medium (as in the percolation model), or
particles in a model for the ferromagnet (such as the Ising/Potts models). In studying the behaviour
of the collective system, it is often key to understand the effect of a variation within a given sub-system. 
That is, what is the probability that a  change in 
a given sub-system has a substantial effect on the collective system?

The quantification of influence is long recognised as 
being central to the understanding of complex random systems.
For example, influence in voting systems was studied by Ben-Or and Linial \cite{BOL} in
work that played an important role in stimulating a systematic theory of influence and sharp threshold with
many applications in random systems (see \cite{KalS} for a review).
In percolation theory, the influence of a node is the probability that the 
node is pivotal for a given global event
(see \eqref{eq:pivotal} for 
the definition of pivotality). Estimates for influence are key to most of the principal results
for percolation (see \cite{G99}, for example). In these two areas above, the sub-systems are generally taken
to be \emph{independent} random variables. This is, however, not so for 
a number of important processes of statistical mechanics including the Ising and Potts models,
in which the sub-systems are dependent but usually positively correlated. For such systems,
\lq influence' requires a new definition, and this is provided in \cite{GG,GG11} in the context of the
Ising and random-cluster models (see \cite{G-RC}). 
 
The origins of influence are rather older than the above work, and go back at least to the work of
Penrose \cite{Pen} in 1946 and possibly the reliability literature surveyed by Barlow
and Proschan \cite{BP65} in 1965.    The two definitions of influence, referred to above, are presented next
in the context of a voting system (we shall use the standard terminology of
probability theory and the theory of interacting systems).  

\subsection{Definitions of absolute and conditional influences}\label{sec:22}
There is a population $P$ containing $m$ individuals, and a vote is taken between two possible
outcomes, labelled $+1$ and $-1$. Each individual votes either $+1$ or $-1$. 
We write $X(i)$ for the vote of person $i$, and we assume the $X(i)$ are random variables.
The \emph{vote-vector} $X=(X(1),X(2),\dots,X(m))$ takes values in the so-called
\lq configuration space'  $\Si=\{-1,1\}^m$. 
There exists a predetermined subset $A \subseteq \Si$, and the vote is declared to \emph{pass} if 
and only if $X \in A$. It is normal to consider sets $A$ which are \emph{increasing} in that
\begin{equation}\label{eq:inc}
\si\in A,\ \si\le\si'\q \Rightarrow \q \si'\in A.
\end{equation}
The inequality $\si\le\si'$ refers to the natural partial order on $\Si$ given by
$$
\si\le\si' \q\text{if and only if}\q \si(i)\le \si'(i) \text{ for all }i \in P.
$$ 
For concreteness, \emph{we assume henceforth that $A$ is an increasing subset of $\Si$}
(that is, an \lq increasing event').

For $i\in P$ and a configuration $\si=(\si(1),\si(2),\dots,\si(m))\in\Si$, 
we define the vectors $\si^i$, $\si_i$ by
\begin{equation}\label{eq:siud}
\si^i(j) = \begin{cases} 1 &\text{if } j=i,\\ \si(j) &\text{otherwise,}\end{cases}\qq
\si_i(j) = \begin{cases} -1 &\text{if } j=i,\\ \si(j) &\text{otherwise.}\end{cases}\qq
\end{equation}
That is, $\si^i$ (\resp, $\si_i$) agrees with $\si$ except possibly at $i$,
with $i$'s vote set to $1$ (\resp, $-1$).
Individual $i$ is called \emph{pivotal} if the outcome of the vote changes when s/he changes opinion
(the words \emph{decisive} and \emph{critical} are
sometimes used in the voting literature). More formally,
$i$ is called \emph{pivotal} for the configuration  $\si$ if 
\begin{equation}\label{eq:pivotal}
\si_i\notin A, \qq \si^i \in A.
\end{equation}
This holds since $A$ is assumed increasing.

In all situations considered in this paper, the individual votes $X(i)$ are assumed 
to be identically distributed and symmetric in that
\begin{equation}\label{eq:Xsym}
\PP(X(i)=1) = \PP(X(i)=-1)=\tfrac12,
\end{equation}
where $\PP$ denotes the probability measure that governs the vote-vector $X$.
Assumptions of independence will be introduced where appropriate.

\begin{definition}\label{defn2-1}
We say that the vector $X$ is \emph{symmetric} if 
\begin{romlist}
\item $X$ and $-X$ have the same distributions, and
\item for all $i \ne j$ there exists a permutation  $\pi$ of $\{1,2,\dots,m\}$
with $\pi_i=j$ such that $X$ and $\pi X$ have the same distribution, 
where $\pi X$ denotes the 
permuted vector $(X_{\pi_1}, X_{\pi_2}, \dots, X_{\pi_m})$.
\end{romlist}
\end{definition}

Condition (ii) above is weaker than requiring 
that $X$ be exchangeable, and an example is included in Appendix \ref{ex:1}.

\begin{definition}
Let $A$ be an increasing event.
\begin{letlist}
\item The \emph{absolute influence} of voter $i$ is 
\begin{align*}
\al(i) &:= \PP(X^i\in A) - \PP(X_i \in A)\\
&= \PP(X_i\notin A,\, X^i \in A) .
\end{align*}
\item The \emph{conditional influence} of voter $i$ is 
$$
\kappa(i):= \PP(A \mid X(i)=1) - \PP(A\mid X(i)=-1).
$$
\end{letlist}
\end{definition}

The so-called \lq power index', or the Banzhaf (or Banzhaf--Penrose) power index in the impartial culture
context, is the term used by many authors for the absolute influence given above.

When $\PP$ is a product measure (that is, the $X(i)$ are independent), it may be seen that
$\al(i)=\kappa(i)$, and the common value $\al$ is termed simply
\emph{influence} by Russo \cite{Ru81} and Ben-Or and Linial \cite{BOL}
(and \emph{power} by Penrose \cite{Pen}).
Equality does not generally hold when $\PP$ is not a product measure.
The above concept of \lq conditional influence' was identified
in \cite{GG}, where it was shown to be the correct adaptation of absolute influence
in proofs of sharp-threshold theorems for certain families
of \emph{dependent} measures arising in stochastic geometry and statistical 
physics. It has featured recently in work \cite{KKLN} on so-called prediction values within probabilistic games.

\begin{remark}[Success probability]\label{rem:Puk}
The \emph{success probability} $\eta(i)$ 
of voter $i$ is the probability that $i$ votes on
the winning side, that is,
$$
\eta(i) := \PP\bigl(A \cap \{X(i)=1\}\bigr) + \PP\bigl(\ol A \cap \{X(i)=-1\}\bigr),
$$
where $\ol A$ is the complement of $A$. See, for example,  \cite{PukB14,LV}.
If the $X(i)$ satisfy \eqref{eq:Xsym},
the conditional influence is related to the  success probability
by the relation 
\begin{equation}\label{succ}
\eta(i)=\tfrac12(1+\kappa(i)),
\end{equation}
see \cite[Thm 3.2.16]{FM} and \cite[Sect.\ 2(a)]{Pen}. This relation is, in fact,
the key step in the proof of the forthcoming Proposition \ref{prop1}.
The success probabilities feature in the recent work of Kirsch \cite{Kir17}.
\end{remark}

\subsection{Examples of influences}\label{sec:23}
There follow three examples of calculations that illustrate the differences between
absolute and conditional influence. For convenience,
we assume $m=2r+1$ is an odd number, and take $A$ to be the \emph{majority event}, that is, 
\begin{equation}
\label{majev}
A=\left\{\si: \sum_i \si(i)> 0\right\}.
\end{equation}
It is clear that $A$ is an increasing set.
We shall take the $X(i)$ to be Bernoulli random variables \emph{with a shared parameter $u$
which may itself be random}. Thus, the $X(i)$ are not generally independent.

Let $U$ be a random variable taking values in the interval $(0,1)$. Conditional on the event $U=u$,
the $X(i)$ are defined to be independent random variables with
\begin{equation}\label{eq:collb}
X(i) = \begin{cases} 1 &\text{with probability }  u,\\
-1 &\text{with probability } 1-u.
\end{cases}
\end{equation}
If $U$ has a symmetric distribution (in that $U$ and $1-U$ are equally distributed),
then the ensuing vote-vector $X$ is symmetric (and, indeed, exchangeable),
and this is called the \lq collective bias' model by Kirsch \cite{Kir07,Kir16}
(see also \cite{KirL}). 
Here are three examples of collective bias in which the absolute and conditional influences vary greatly.

\begin{numlist}
\item \emph{Independent voting.}
Let $\PP(U=\frac12)=1$. The $X(i)$ are independent, unbiased Bernoulli variables,
and 
\begin{equation}\label{eq:absinf}
\al(i)=\kappa(i)=\binom {2r}r \left(\frac12\right)^{2r} 
\sim \frac1{\sqrt{\pi r}} =  \sqrt{\frac2{\pi (m-1)}} \qq \text{as } m \to\oo.
\end{equation}
\item 
\emph{Uniform bias.}
Let $U$ be uniform on the interval $(0,1)$.
Then
\begin{equation}\label{eq:unifinf}
\al(i) = \int_0^1  \binom {2r}r u^r(1-u)^r\,du =\frac1m,\qq
\kappa(i)=\frac12+\o(1).
\end{equation}
\item
\emph{Polarised bias.}
Let $\PP(U=\frac13)=\PP(U=\frac23)=\frac12$. There
exists $\gamma>0$ such that
\begin{equation}\label{eq:polbias}
\al(i)=\o(e^{-\gamma m}), \qq \kappa(i)=
\frac13 +\o(1).
\end{equation}
\end{numlist}
We remind the reader that $f(m)=\o(g(m))$ means $f(m)/g(m) \to 0$ as $m\to\oo$.
The success probabilities in Cases 1--3 follow by \eqref{succ} from the above calculations.

Cases 2 and 3 are exemplars of more general situations in which: ($2'$)  the distribution 
of $U$ on some neighbourhood of  $\frac12$ is absolutely continuous
(see, for example, \cite[p.\ 674]{GKB}), and $(3')$
$U$ is almost surely bounded away from $\frac12$ (see, for example, 
the formulation of \cite[p.\ 592]{KLMT}).

\begin{remark}\label{rem24}
Only in the case of independence does the absolute 
influence have the order of the square root $1/\sqrt m$. 
In the two other situations above, the absolute influence 
is as small as $1/m$ and $e^{-\g m}$, \resp.
This illustrates the fragility of the square-root laws \eqref{1}, \eqref{4} and 
their consequences for voting (see \cite{KMN}).
\end{remark}

Further discussion of the relationship between absolute and conditional
influence  may be found in \cite[Sect.\ 2]{GG}. A review of influence for product
measures is found at \cite{KalS}, see
also \cite[Sect.\ 4.5]{G-pgs}. Uniform bias 
was discussed in \cite{ER}, and polarised bias in \cite{GG}.
Conditional influence is essentially the prediction value of \cite{KKLN}.

\subsection{Penrose's two square-root laws unified.}
We present next an elementary application of conditional influence (the proof is found
at the end of the section). We will see its relevance in
the discussion of the Penrose square-root laws in Remark \ref{rem33}.

\begin{proposition}\label{prop1}
Let $m=2r+1$ be odd.
Assume that $X$ and $-X$ have the same distributions, and let $A$ be the majority event of \eqref{majev}. 
Then $S=\sum_{i=1}^m X(i)$ satisfies
$$
\EE|S| =  \sum_{i=1}^m \kappa(i).
$$
If $X$ is symmetric, then $\kappa=\kappa(i)$ is constant and $\EE|S|=m\kappa$.
\end{proposition}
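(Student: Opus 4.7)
The plan is to exploit the identity \eqref{succ} relating conditional influence to the success probability, namely $\eta(i) = \tfrac12(1+\kappa(i))$, which applies since the assumption that $X$ and $-X$ are equidistributed immediately gives \eqref{eq:Xsym}. The point is to evaluate $\sum_i \eta(i)$ in two ways: once through this identity, and once by a direct double-counting of configurations weighted by the number of voters on the winning side.

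First I would unpack the definition of the success probability and sum over $i$:
\[
\sum_{i=1}^m \eta(i) = \sum_{i=1}^m \PP\bigl(A\cap\{X(i)=1\}\bigr) + \sum_{i=1}^m \PP\bigl(\ol A \cap\{X(i)=-1\}\bigr).
\]
Write $N_+=|\{i:X(i)=1\}|$ and $N_-=m-N_+$, so that $S=N_+-N_-=2N_+-m$. Interchanging sum and expectation, the first sum equals $\EE[\mathbf 1_A N_+] = \EE[\mathbf 1_A(S+m)/2]$, and the second equals $\EE[\mathbf 1_{\ol A} N_-]=\EE[\mathbf 1_{\ol A}(m-S)/2]$.

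The key observation is that, since $m$ is odd, the sum $S$ is almost surely nonzero and has the same parity as $m$; hence $\mathbf 1_A = \mathbf 1_{\{S>0\}}$, and $|S| = S \mathbf 1_A - S \mathbf 1_{\ol A}$. Combining the two expectations,
\[
\sum_{i=1}^m \eta(i) = \tfrac{m}{2}\bigl(\PP(A)+\PP(\ol A)\bigr) + \tfrac12 \EE\bigl[S(\mathbf 1_A - \mathbf 1_{\ol A})\bigr] = \tfrac{m}{2} + \tfrac12 \EE|S|.
\]
On the other hand, applying \eqref{succ} voter by voter gives $\sum_i \eta(i) = \tfrac{m}{2} + \tfrac12 \sum_i \kappa(i)$. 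Equating the two expressions yields $\EE|S|=\sum_i\kappa(i)$, as required.

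For the final assertion under the symmetry hypothesis of Definition \ref{defn2-1}, I would note that the majority event $A$ is invariant under every permutation of $\{1,\dots,m\}$. Given $i\ne j$, choose a permutation $\pi$ with $\pi_i=j$ such that $X$ and $\pi X$ have the same distribution; then $\kappa(j)$ computed under $X$ equals $\kappa(i)$ computed under $\pi X$, which equals $\kappa(i)$. Hence $\kappa(i)$ is independent of $i$, and the formula $\EE|S| = m\kappa$ follows.

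There is no substantive obstacle; the only subtlety is the parity argument that $S\ne 0$ almost surely, which allows the clean identification of $|S|$ with $\pm S$ on $A$ and $\ol A$ and so aligns the double-counted expectation with $\EE|S|$ rather than $\EE|S|+\PP(S=0)\cdot(\text{correction})$.
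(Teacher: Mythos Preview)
Your proof is correct and is essentially the paper's argument repackaged through the success probability: the paper computes directly $\EE|S|=2\EE(S1_{S>0})=2\sum_i\EE(X_i1_{S>0})=\sum_i\kappa(i)$, which is the same chain of identities with $\eta(i)$ eliminated (indeed Remark~\ref{rem:Puk} already flags \eqref{succ} as the key step). One minor point: your parity remark is unnecessary, since $S(\mathbf 1_A-\mathbf 1_{\ol A})=|S|$ holds identically (both sides vanish on $\{S=0\}$), so neither your argument nor the paper's actually requires $m$ to be odd.
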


We next interpret Proposition \ref{prop1} in the language of Penrose, see Section \ref{sec:1.2}.
Consider a population of size $N_j$, and suppose the corresponding vote-vector $X=(X(1), X(2), \dots, X(N_j))$ is
symmetric. Then $\kappa=\kappa(i)$ does not depend on $i$. The number $H$ of people voting $1$
satisfies $H=\frac12(S+N_j)$, so that, by Proposition \ref{prop1} with $m=N_j$,
\begin{equation}\label{12}
\EE\left|H-\tfrac12 N_j\right| =\tfrac12 N_j\kappa .
\end{equation}
In conclusion,  $\EE|H-\tfrac12 N_j|$ grows in the manner of $\sqrt {N_j}$ if and 
only if $\kappa$ behaves in the manner of $1/\sqrt{N_j}$.

\begin{remark}\label{rem33}
In the language of Penrose \cite{Pen},
the mean \lq edge' differs from the \emph{conditional influence} by the constant multiple $N_j$.
Thus, in the context of general distributions, 
conditional influence takes precedence over absolute influence. Penrose's argument implies that, in a two-tier
voting system, the appropriate weight of state $j$ satisfies $w_j \propto N_j \kappa$,
where $\kappa=\kappa_j$ is the conditional influence. 
In this sense, Penrose's \lq\lq two" square-root laws are in reality only one,
so long as one uses \emph{conditional} rather than \emph{absolute} influences:
the asymptotic edge addressed by the
second square-root law is simply a  multiple of the asymptotic power addressed by
the first square-root law.
When voting is truly independent, the distinction between absolute and conditional influence  is nominal only.
Seen in the light of Remark \ref{rem:Puk},
Proposition \ref{prop1} supports the thesis that, for general probability measures,
the success probability is a more central quantity than the absolute influence
(cf.\ \cite[Sect.\ 3.6]{LV}).
\end{remark}

The question arises of deciding the \lq correct' definition of influence in the general voting context.
There does not seem to be a simple answer to this somewhat philosophical question,
which lies beyond the scope of this mathematical paper. Some minor reflections are offered,
within the context of the symmetric voting model of Definition \ref{defn2-1}.   

\begin{letlist}
\item If we are trying to capture the probability that an individual can, as a
theoretical exercise in free will, affect the outcome of a vote, then
we might favour absolute influence.  This interpretation
requires stepping outside the mathematical model of Definition \ref{defn2-1},
by postulating the existence of a unique individual
P who votes independently of the rest of the population. Then the absolute influence of P equals the
probability that P's vote is pivotal. 

\item When different votes are correlated, a sample of one vote contains information about the other 
votes. If we wish to gain such information, then 
conditional influence is a way to do so. On the other hand, conditional influence contains very little information
about the power of any given individual. When votes are correlated, individual power tends
to be rather small, and sometimes so small that it ceases to have great value as a discriminator.

\item Only with the help of a probabilistic model can we calculate influence.
An analysis of the above question depends on the interpretation of \lq chance' or \lq randomness' in such a model. 
Does it make practical sense to model votes as unbiased random variables?
Possibly in response to this question, some authors have argued that the views of
voters may not themselves be considered random, but it is rather the 
\emph{proposals} that are random (see, for example, \cite[p.\ 38]{FM} and \cite[p.\ 360]{Kir07}). 
This interesting suggestion poses
some philosophical challenges in its own right, not least 
arising from correlation between the responses of a given  
voter to different proposals.
\end{letlist}

\begin{proof}[Proof of Proposition \ref{prop1}]
Let $1_A$ denote the indicator function of 
an event $A$. Then, since $X$ and $-X$ have the same distribution,
\begin{align*}
\EE|S| &= \EE(S1_{S>0}) - \EE(S1_{S<0})\\
&= 2\EE(S1_{S>0})\\
&= 2\sum_{i=1}^m \EE(X_i1_{S>0}) \\
&=  \sum_{i=1}^m\bigl[\PP(S>0\mid X_i=1) - \PP(S>0 \mid X_i=-1)\bigr]\\
&=\sum_{i=1}^m \kappa(i).
\end{align*}
Subject to symmetry, the constantness of $\kappa(i)$ holds by choosing suitable permutations
of $\{1,2,\dots.,m\}$.
\end{proof}

\section{Two-tier voting}\label{sec:ttv}

\subsection{Two-tier voting systems}
We assume there
exist $s$ States with respective populations $N_1, N_2, \dots, N_s$ 
(which we take for simplicity to be odd numbers).
States are each allowed one representative on the Council of States. 
Each State is assumed to conduct
a ballot on a given issue, and the vote of voter $i$ in State $j$ is denoted $X_j(i)\in\{-1,1\}$. The outcome of
the vote in state $j$ is taken to be
\begin{equation}\label{eq:sjdef}
\chi_j:= \begin{cases} 1 &\text{if } S_j := \sum_{i=1}^{N_j} X_j(i) >0,\\
-1 &\text{otherwise}.
\end{cases}
\end{equation}
That is, $\frac12(1+\chi_j)$ is the indicator function of the event that $S_j>0$.

\begin{assumption}[\cite{Kir07}]\label{ass1}
We assume the vectors $X_j=(X_j(i): i=1,2,\dots,N_j)$, $j=1,2,\dots,s$, are independent,  which is to say that
the votes of different States are independent. We make no assumption 
for the moment about the voters of any given State
beyond that, for given $j$, the vectors $X_j$ are symmetric in that
 $X_j$ and $-X_j$ have the same distribution.
\end{assumption}

To the State $j$ is assigned a \emph{weight} $w_j>0$, and we write $W=\sum_j w_j$
for the aggregate weight of the States. 
The representative  of state $j$ votes $\chi_j$, and the weighted sum
$$
V:=\sum_{j=1}^s w_j \chi_j, 
$$
is calculated. The motion is said to \emph{pass} if
\begin{equation}\label{eq:pass}
V > q W,
\end{equation}
and to \emph{fail} otherwise, 
where $q$ is a predetermined \emph{quota} (this is not  the quota of \cite{SZ07}, but rather that
of \cite{Kir07}, see also \eqref{2}). 
This voting system depends on the weights $w=(w_j)$ and the quota $q$,
and we refer to it as the $(w,q)$ system.

Since votes are assumed independent \emph{between} States, the
absolute and conditional influences coincide at the level of the Council.

\begin{question}\label{qn1}
How should the weights $w_j$ and the quota $q$ be chosen?
\end{question}

We summarise two approaches.

\subsection{Penrose/Kirsch and least squares \cite{Kir07,Pen}}\label{sec:3.2}

Penrose has argued that, within any given state, 
the strength of a vote is proportional to the mean \lq edge', that is, the quantity 
$\EE|N_F-N_A|$
where $N_F$ is the number voting for the successful outcome and $N_A$ 
is the number voting against.  Now, $N_F-N_A = S_j$, where $S_j$ is given in
\eqref{eq:sjdef}.
This motivates the \lq Penrose' proposal that $w_j = \EE|S_j|$.

Kirsch \cite{Kir07} has proposed choosing the $w_j$ in such a way as to 
minimise the mean sum of squared errors
$$
Q:= \EE\left(\left[\sum_{j=1}^s (S_j-w_j\chi_j)\right]^2\right).
$$
A quick proof of the following proposition is given at the end of the subsection.

\begin{proposition}[{\cite[Thm 2.1]{Kir07}}]\label{prop:kir}
Subject to Assumption \ref{ass1}, the quantity $Q$ is minimised when
$w_j = \EE|S_j|$ for $ j=1,2,\dots,s$.
\end{proposition}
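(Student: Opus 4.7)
The plan is to expand $Q$ as a double sum, use independence across States together with symmetry to collapse it to a diagonal sum, and then optimize each one-variable quadratic separately.

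First I would record two elementary identities that follow from the setup. Since each $N_j$ is odd, the sum $S_j$ of $N_j$ values in $\{-1,+1\}$ is nonzero; by the definition of $\chi_j$ in \eqref{eq:sjdef}, this gives $\chi_j^2 = 1$ and $S_j \chi_j = |S_j|$ almost surely. Next, the symmetry of $X_j$ (part of Assumption \ref{ass1}) implies that $S_j$ and $-S_j$ have the same distribution, hence $\EE[S_j] = 0$ and $\EE[\chi_j] = 0$, so $\EE[S_j - w_j \chi_j] = 0$ for every choice of $w_j$.

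Now I would write
\begin{align*}
Q &= \sum_{j,k} \EE\bigl[(S_j - w_j \chi_j)(S_k - w_k \chi_k)\bigr],
\end{align*}
and observe that, by the independence of the vectors $X_j$ (and hence of the pairs $(S_j, \chi_j)$) across States, the off-diagonal terms factor:
\begin{align*}
\EE\bigl[(S_j - w_j \chi_j)(S_k - w_k \chi_k)\bigr] = \EE[S_j - w_j \chi_j]\,\EE[S_k - w_k \chi_k] = 0, \qquad j \ne k.
\end{align*}
Thus $Q = \sum_{j=1}^s \EE\bigl[(S_j - w_j \chi_j)^2\bigr]$, and the problem decouples into $s$ independent one-dimensional minimizations.

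Finally I would expand each diagonal term using the identities above:
\begin{align*}
\EE\bigl[(S_j - w_j \chi_j)^2\bigr] = \EE[S_j^2] - 2 w_j \EE|S_j| + w_j^2.
\end{align*}
This is a convex quadratic in $w_j$, minimized uniquely at $w_j = \EE|S_j|$, giving the claim. There is no real obstacle here; the only substantive input is recognizing that symmetry plus between-State independence kills all cross terms, after which the proof is calculus in one variable.
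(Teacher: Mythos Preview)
Your proof is correct and follows essentially the same route as the paper's: use symmetry to get zero means, use between-State independence to reduce $Q$ to a sum of variances $\sum_j \var(S_j - w_j\chi_j)$, and then minimize each quadratic separately to obtain $w_j = \EE(S_j\chi_j) = \EE|S_j|$. Your version is slightly more explicit (spelling out why $S_j\chi_j = |S_j|$ via the oddness of $N_j$), but the structure is identical.
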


Thus, Kirsch's least-squares principle leads to the Penrose solution $w_j = \EE|S_j|$,
which we call the \emph{majority rule}.
As explained by Kirsch, this motivates the choice
\begin{equation}\label{eq:kirsch}
w_j = \begin{cases} \sqrt {N_j} &\text{if there is no long-range order},\\
N_j &\text{if there is long-range order},
\end{cases}
\end{equation} 
where \lq long-range order' is interpreted
as the non-decay of correlations (see the related Appendix \ref{A-ferro}).\footnote{The term \lq long-range order'
arises in statistical mechanics in situations where random spins $\sigma_v$ are located at the vertices $v$
of a lattice, such as the $d$-dimensional integer lattice $\ZZ^d$, and the correlation between two spins
$\sigma_v$, $\sigma_w$ does not approach $0$ as the distance between $v$ and $w$ diverges.}
For example,
Case 1 of Section \ref{sec:23} has no long-range order, but Cases 2 and 3 possess long-range order. 
See also Remark \ref{rem33}.

\begin{proof}[Proof of Proposition \ref{prop:kir}]
By Assumption \ref{ass1}, 
\begin{alignat*}{3}
Q&=\var\left(\sum_j (S_j-w_j\chi_j)\right)\qq &&\text{since $\EE(S_j)=\EE(\chi_j)=0$}\\
&=\sum_{j=1}^s \var (S_j-w_j\chi_j)\qq&&\text{since the $X_j$ are independent}.
\end{alignat*}
By calculus, the last summand is a minimum when $w_j=\EE(S_j\chi_j)$ as claimed.
\end{proof}

\subsection{S\l omczy\'nski/{\. Z}yczkowski and influence \cite{SZ06, SZ07,SZ11}} \label{sec:SZ}
Let us concentrate on the situation in which the entire vote-set  
$(X_j(i) : i =1,2,\dots, N_j,\ j=1,2,\dots,s)$ is a  family of independent random variables.
By independence, the absolute and conditional influences (within States) are equal.
The influence $\al_j:=\al_j(i)$ of a member of State $j$ is (asymptotically as $N_j\to\oo$)
proportional to $1/\sqrt{N_j}$, by \eqref{eq:absinf}.
The Penrose/Kirsch proposal of Section \ref{sec:3.2} amounts to $w_j = \EE|S_j| \propto \sqrt {N_j}$.
The product $\al_j w_j$ is (asymptotically, for large $N_j$)  constant across the States.
This may be seen as evidence that the voting system with this set of weights is \lq fair' across the union of the States.

How does one calculate the so-called \lq total influence' of a given voter in the $(w,q)$ system? 
A given voter is pivotal overall if s/he is pivotal within the relevant State vote, 
and furthermore the outcome of that vote is pivotal in the Council's vote.
By Assumption \ref{ass1}, the \emph{total influence} $I_j$ of voter $i$ in State $j$ is the product
\begin{equation}\label{eq:Inf}
I_j = \al_j\be_j,
\end{equation} 
where $\be_j=\be_j(w,q)$ is the influence of State $j$ in the Council's vote.
(See \cite[p.\ 67]{FM}.)
\emph{We seek a pair $(w,q)$ such that the total influences are equal (or nearly so)
across the States $j$.}

The total influences $I_j$ of \eqref{eq:Inf} need not be proportional to
the products $\al_j w_j$ of the previous paragraph,
 since the ratios $\be_j / w_j$ are in general non-constant across the States.
A number of authors including \SZ\ \cite{SZ11} have developed the following approach.
\begin{numlist}
\item Allocate to State $j$ the weight $w_j=\sqrt{N_j}$.
\item Calculate or estimate the State-influences $\be_j$ as functions of  $(w,q)$.
\item Identify a quota $q$ such that $\be_j$ is an approximately linear function of $w_j$.
\item The ensuing products $I_j=\al_j\be_j$ are approximately constant across States.
\end{numlist}

They have proposed choosing the quota  $q$   
in \eqref{eq:pass} in such a way that, for the given weights $(w_j)$,  the sum of squared differences
$$
Q:= \sum_{j=1}^s (\ol w_j-\ol \be_j)^2
$$
is a minimum, where 
$$
\ol w_j= \frac{100 w_j}{\sum_k w_k}, \qq \ol\be_j =\frac{100 \be_j}{\sum_k \be_k},
$$
are the \lq normalised' influences and weights, \resp\ (see Table \ref{table:1}). 
They present numerical, empirical, 
and theoretical evidence that this is often achieved when $q$ is near  
\begin{equation}\label{eq:q*}
q^* := \frac {\sqrt N}{\sum_j \sqrt{N_j}}, \qq\text{where } N=\sum_{j=1}^s N_j.
\end{equation}
The theoretical foundation for this proposal lies in: (i) approximating $\be_j$ by a Gaussian integral, 
and (ii)
picking $q$ such that the integrand is close to linear in $w_j$. The latter step is achieved by finding the
point at which the $N(\mu,\si^2)$ Gaussian density function 
has an inflection, and is thus locally closest to being locally linear. This inflection is easily
found by calculus to be at $q:= \mu\pm\si$, and this leads to the formula \eqref{eq:q*}.

In summary, they argue that, when $w_j=\sqrt{N_j}$ and $q=q^*$,  
the $\be_j=\be_j(w,q)$ are close to the $w_j$, and hence 
the total influences $I_j = \al_j\be_j$ are close to the products $\al_jw_j$. Finally, 
since $\al_j \sim C/ \sqrt {N_j}$ and $w_j = \sqrt{N_j}$, the last product is asymptotically constant
across the States.

The above procedure is termed the Jagiellonian Compromise (or JagCom).
We discuss in Section \ref{sec:sinf} some aspects of the derivation of the quota $q^*$
in \eqref{eq:q*}. Some peripheral support for this choice of quota may be found  in
the analyses of \lq toy models' in \cite{SZ07, tomski}.

\begin{remark}
\label{rem34}
The weights $w_j$ are chosen first in the JagCom, and then the quota $q$ is chosen according to a
linearisation argument. It may instead
be preferable to choose the parameters $(w,q)$ in such a way
that the deviation in the total influences $I_j$ is a minimum.
See, for example, \cite{Kurz12}.
\end{remark}

\section{\lq Total influences' in a two-tier system}\label{sec:sinf}

\subsection{Total influences}
A mathematical derivation of the JagCom 
quota $q^*$, \eqref{eq:q*}, seems to require certain approximations
which we discuss next. The first issue is to identify the purpose of the analysis. Let $I_j$ be the
total influence of a member of State $j$, as in \eqref{eq:Inf}. One extreme way of
achieving the near-equality of the $I_j$ is to set the quota $q$ on the left side of
\eqref{2} to be  either $-\eps+\sum_j \sqrt{N_j}$ or its negation, where $\eps>0$ is small. 
If we insist on such unanimity, we achieve
$$
I_j =\al_j\left(\frac12\right)^s \sim \frac{C}{\sqrt{N_j}} \left(\frac12\right)^s.
$$
For large $s$, these influences are nearly equal, indeed nearly equal to $0$. Their ratios
however can be as large as $\sqrt{\Nmax/\Nmin}$, where $\Nmax$ (\resp, $\Nmin$) is 
the maximum (\resp, minimum) population size.
An alternative target is that the ratios $I_j/I_k$ be as close to unity as possible, 
and a secondary target might be that the total influences are as large as possible. 
We consider this next.

Consider a vote of the Council in which each State $k$ has a preassigned weight $w_k>0$.
Let $j \in \{1,2,\dots,s\}$. 
By \eqref{eq:pass}, State $j$ is pivotal for the outcome if:
the set $J$ of States (other than $j$) voting for the motion is such that
\begin{equation}\label{eq:3}
w_J+w_j-w_{\ol J} > qW, \qq w_J-w_j-w_{\ol J} \le qW,
\end{equation}
where $J\subseteq \{1,2,\dots,s\}\setminus\{j\}$, $\ol J = \{1,2,\dots,s\}\setminus(J\cup\{j\})$, and 
$$
w_K:= \sum_{k\in K} w_k, \qq K \subseteq\{1,2,\dots,s\}.
$$
Inequalities \eqref{eq:3} may be written in the form $qW-w_j < Z_j \le qW+w_j$ where 
\begin{equation}\label{eq:Z}
Z_j=w_J-w_{\ol J} = \sum_{k\ne j} w_k \chi_k, \qq j\in\{1,2,\dots,s\},
\end{equation}
and $(\chi_k:k=1,2,\dots,s)$ is a family of independent Bernoulli random variables with
$$
\PP(\chi_k=1)=\PP(\chi_k=-1)=\tfrac12.
$$  
Therefore, State $j$ is pivotal in the Council with probability
\begin{align}\label{eq:diff}
\be_j &:= \PP\bigl(qW-w_j < Z_j \le qW+w_j\bigr)\\
&= F_{Z_j}(qW+w_j)-F_{Z_j}(qW-w_j),\nonumber
\end{align}
where $F_{Z_j}$ is the distribution function of $Z_j$.
(Similar formulae appear in \cite[App.]{SZ07}.)

\SZ\ \cite{SZ11} argue that the $F_{Z_j}$  are 
\lq nearly' Gaussian, and they consider the appoximation
\begin{equation}\label{eq:app}
\be_j \approx 2w_j \phi_{\mu_j,\si_j}(qW)
\end{equation}
where $\phi_{\mu,\si}$ is the $N(\mu,\si^2)$ Gaussian density function, and
\begin{equation}\label{eq:sigma2}
\mu_j=\EE(Z_j) = 0,\qq \si_j^2=\var(Z_j) =\sum_{k\ne j}w_k^2.
\end{equation}
More precisely, \eqref{eq:app} expresses the hypothesis that the distributions are nearly Gaussian, and that
the approximation is sufficiently uniform to be accurate to the first order, in terms of the density function.
They argue that the approximation is most accurate when $q$ is chosen in such a way that 
$qW$ is a point of inflection of $\phi_{\mu_j,\si_j}$, and this leads to the choice
$q=q^*$, with $q^*$ as in \eqref{eq:q*}.  
It is explained at the end of Appendix \ref{sec:4.2}, to which the reader is referred for
further details, that the ensuing approximations are good
in the case of the population-sizes of the EU Member States, but no proof is known to sufficient accuracy to
permit the \emph{rigorous} deduction of the quota, $q^*$.
The currently best theoretical tool for Gaussian approximations
is the so-called Berry--Esseen bound, and this is not good enough for
our purpose here. See Table \ref{table:2}.

\subsection{The argument via numerical methods}\label{sec:43}
Once one has accepted the thesis that voters are independent and unbiased, 
there is a transparent logic to the choice of weights $w_j = \sqrt{N_j}$.
Attention then turns to the choice of quota $q$. It is shown in Appendix \ref{sec:4.2}
that the mathematical argument of \SZ\ \cite{SZ07}, while neat, is at best incomplete.
In contrast, the \emph{numerical} evidence of \cite{SZ04}, in favour of $q=q^*$, retains some persuasive power. 
Similar numerical work has been carried out for the current article using
QMV2017 population data taken from \cite{PukG17}, with the results
reported in Table \ref{table:1}. These results are exact rather than being based on simulation.

\begin{table}
\begin{center}
\definecolor{mygray}{gray}{0.9}
\newcolumntype{s}{>{\columncolor{mygray}} }
\setlength{\arrayrulewidth}{0.5mm}
 \begin{tabular}{r |s l || c c | c c sc|c c sc }
\rowcolor{mygray} \multicolumn{2}{c||}{Member State} & \multicolumn{2}{c|}{weights} & \multicolumn{3}{c|}{$q=0$} &
 \multicolumn{3}{c}{$q=q^*$}  \\
\rowcolor{mygray}  $j$ && $w_j$ & $\ol w_j$ & $\be_j$ &  $\ol\be_j$ & $\ol\be_j/\ol w_j$  & $\be_j$ & $\ol\be_j$ & $\ol\be_j/\ol w_j$\\
 \hline\
 1 & Germany &  9.059  & 9.963 &0.357&  10.414 &  1.045  &0.211& 9.937 & 0.997\\
 2 & France &  8.165  & 8.979 &0.317& 9.239  & 1.029  &0.191&  8.984 & 1.001\\
 3 & Italy  &  7.830 & 8.611 &0.302& 8.816 & 1.024 &0.183& 8.619 & 1.001\\
 4 & Spain  & 6.815 & 7.495 &0.260& 7.575 & 1.011 &0.159& 7.507 & 1.002 \\
 5 & Poland   & 6.162 & 6.777 &0.233& 6.802 & 1.004  &0.144& 6.787 & 1.001 \\
 6 & Romania & 4.445 & 4.888 &0.166& 4.839 &  0.990  &0.104& 4.891 & 1.001 \\
 7 & Netherlands  & 4.152 & 4.566 &0.155& 4.512 & 0.988   &0.097& 4.568  &  1.000\\
 8 & Belgium   & 3.360  & 3.695  &0.125& 3.636 & 0.984  &0.078& 3.696  &  1.000 \\
 9 & Greece   & 3.285 & 3.613 &0.122& 3.554 & 0.984 &0.077& 3.613  &  1.000 \\
\hline
10 & Czech Rep. & 3.232 &3.554 &0.120&  3.495 & 0.983   &0.075& 3.554   & 1.000 \\
11 & Portugal  & 3.216 & 3.537 &0.119& 3.478 & 0.983 &0.075& 3.537  &  1.000 \\
12 & Sweden  & 3.162 & 3.477 &0.117& 3.418 & 0.983  &0.074& 3.477   & 1.000 \\
13 & Hungary  & 3.135 & 3.448 &0.116& 3.389 & 0.983  &0.073& 3.447   & 1.000 \\
14 & Austria  & 2.952 & 3.246 &0.109& 3.189 & 0.982  &0.069& 3.246  &  1.000 \\
15 & Bulgaria  & 2.675 & 2.942 &0.099& 2.886 & 0.981  &0.062& 2.941   & 1.000 \\
16 & Denmark  & 2.388 & 2.626& 0.088& 2.574 &  0.980  &0.056& 2.625  &  1.000 \\
17 & Finland & 2.338  & 2.571 &0.086& 2.520 &  0.980  &0.055&  2.570  &  1.000 \\
18 & Slovakia  & 2.326 & 2.558 &0.086& 2.507 &  0.980 &0.054& 2.557  &  1.000 \\
\hline
19 & Ireland   & 2.160 & 2.375 &0.080& 2.327 &  0.980  &0.050& 2.374  &  1.000 \\
20 & Croatia  & 2.047 & 2.251 &0.076& 2.204 & 0.979   &0.048& 2.250  &  1.000 \\
21 & Lithuania  &  1.700 & 1.870 &0.063& 1.829 & 0.978  &0.040& 1.868 & 0.999 \\
22 & Slovenia   & 1.437 & 1.580 &0.053& 1.545 & 0.978  &0.034& 1.579 & 0.999 \\
23 & Latvia   & 1.403 & 1.543 &0.052& 1.508 & 0.977  &0.033& 1.542 & 0.999\\
24 & Estonia  & 1.147 & 1.261 &0.042& 1.233 & 0.978  &0.027&  1.260 & 0.999 \\
25 & Cyprus  & 0.921  & 1.013 &0.034& 0.990 & 0.977  &0.021& 1.012 & 0.999 \\
26 & Luxembourg  & 0.759 & 0.835 &0.028& 0.815 & 0.976  &0.018& 0.834 & 0.999 \\
27 & Malta  & 0.659 & 0.725 &0.024& 0.708 & 0.977 &0.015& 0.724 & 0.999 \\
\hline
& Totals & 90.930 & 100 &3.429 & 100 &&2.123& 100&\\
\end{tabular}
\end{center}
\vskip12pt
\caption{Member State $j$ has weight $w_j=\sqrt{N_j}$ and normalised weight
$\ol w_j=100w_j/W$, where $W=\sum_j w_j$.
Two values of the quota $q$ are considered, namely, $q=0$ and $q=q^*$
(see \eqref{eq:q*}). For each, the influences $\be_j$ have been computed, and the normalised influences 
$\ol\be_j=100\be_j/B$ are given above, where $B=\sum_j \be_j$. 
The ratios $\ol\be_j/\ol w_j$ are presented alongside the $\ol\be_j$.
The ratios lie in the interval $[0.976,1.045]$ when $q=0$, and in the interval 
$[0.997,1.002]$ when $q=q^*$.}
\label{table:1}
\end{table}

Table \ref{table:1} lends some support to the choice $q=q^*$. 
\begin{letlist}
\item The ratios of normalised influences $\ol\be_j$
to normalised weights $\ol w_j$ are very close to $1$ when $q=q^*$.
\item Further calculations show that the sum of squared differences $Q=\sum_j(\ol w_j-\ol\be_j)^2$, considered
as a function of $q=0, \frac12 q^*, q^*, \frac32 q^*$, is a minimum when $q=q^*$.
(More refined calculations are possible.)
\end{letlist}
We note, however, the following.
\begin{romlist}
\item  The choice $q=q^*$ lacks transparency. 
In contrast, the choice $q=0$ is simple and easy to explain.
\item The ratios  $\ol\be_j/\ol w_j$ are also close to $1$ when $q=0$. 
The agreement is not quite so perfect as when $q=q^*$,
but the differences are minor. 
\item The sum $Q$ is similarly close to $0$ when $q=0$, albeit not so close as when $q=q^*$.
\item The influences $\be_j$ are largest when $q=0$. (See also \cite[App.]{SZ07}.)
\end{romlist}

In summary, the numerics are best when $q=q^*$, but the improvements relative to
the more transparent choice of $q=0$ are minor. The numerical differences between 
these two cases (and indeed other reasonable values of $q$) are so
small that they are unlikely to be separated by any technical analysis of the type of 
Appendix \ref{sec:4.2}. (See also \cite[Fig.\ 7]{widgren}.) We conclude the following. 
\begin{numlist}
\item On the basis of the theoretical and numerical
evidence concerning the ratios $\ol\be_j/\ol w_j$, there is no convincing evidence that any one value of the quota 
is materially preferable to any other.\footnote{Large positive or negative values are 
evidently poor, but 
we consider here only values $q$ such that $qW/\sqrt N$ has order $1$.
Other choices for $q$ have been considered in, for example, \cite{Birk11,CCM}.}
\item The total influences $I_j$ are largest when $q=0$.

\end{numlist}

\section{Some remarks on the Jagiellonian Compromise}\label{sec:rems}

Theoreticians propose, politicians dispose (and certain Presidents of the United States have historically 
played on both teams). Members of each group have interests and potential conflicts.
The theoretician earns respect  through honest assessment of the virtues
(or not) of, and principles underlying, a particular proposal. They
hope that politicians will accord fair weight and balance to principled proposals irrespective of 
personal advantage.   While theoreticians are usually free of conflicts arising out of employment
within a politically aligned organization, politicians are usually heavily conflicted
(see, for example, \cite{PukG17}). 

Communication between the two groups can be challenging. 
The use of language such as \lq local limit theorem' 
and \lq Berry--Esseen bound' tends to create barriers.
Such methodology is however key to proper
study of the two-tier voting system of Sections
\ref{sec:ttv}--\ref{sec:sinf},
and practitioners have worked diligently to communicate its relevance.  

The JagCom proposes the use of square-root weights $w_j = \sqrt{N_j}$
with a specific choice of the quota $q$. 
Each of these two proposals will rightly continue to attract critical discussion.
 
The square-root weights of equation \eqref{1} and Proposition \ref{prop:kir}
may be justified if: (i) there is no bias, and (ii) there is no \lq long-range order' 
(in the language of statistical mechanics).
Each of these assumptions seems over-perfectionist. 
Issues before the Council may be systematically more popular in some States than in others,
and the consequent biases risk undermining either or both of the above two assumptions. 
The \lq collective bias' model of Kirsch and others (see Section \ref{sec:inf})
is both more flexible and more empirical, at serious cost to the square-root laws for
influence and majority (see \cite{KirL}).  Other authors have considered the effect on
weights  of introducing a concave utility function (see Koriyama et al.\ \cite{KLMT}).
Such approaches give rise to weight distributions which, in turn,
benefit from calibration against the  politics and practical workings of the Council.  
The right choice of weights is not a simple matter of finding some neat mathematics.
That said, no concrete proposal
to displace square-root weights in the JagCom is made in the current work.

Having chosen the weights, the identification of the quota is subject to similar ambiguities.
The JagCom \lq exact' 
quota $q^*$ of \eqref{eq:q*} hinges on the assumptions of square-root weights and equality of
absolute influence, in combination
with numerical data and the Gaussian approximation of Appendix \ref{sec:4.2}.
The last is unproven in the current context of the 
QMV2017 population data
of the States of the EU. 

In their favour, the proposed square-root weights and the exact 
quota $q^*$ of the JagCom have been derived via a set of principles that can be stated unambiguously and analysed
fairly rigorously, and which are robust with respect to changes in population data. 
Proponents argue that they are thus less susceptible than many
alternatives  to \lq political meddling'.

We turn now to the numerics of the JagCom quota $q^*$.
If the ratios $\ol \be_j/\ol w_j$ in Table \ref{table:1} are close to $1$, then
the total influences  $I_j  = \al_j\be_j$ of \eqref{eq:Inf} are almost constant across Member States.
As indicated in the shaded columns of the table, this holds for both $q=0$ and $q=q^*$ 
(they are nearly perfect when $q=q^*$, and very close for other values of $q$).
Indeed this holds for a range of values of $q$ including the values $0$ and $q^*$
(see also \cite{widgren}).
\emph{One may deduce that, from a practical point of view, there
is little to choose between different values of $q$. This may be a situation in
which political considerations may have the final word.}
It seems generally considered to be the case that there is no enthusiasm amongst politicians
for the choice $q=0$, on the grounds that  a body such as the EU Council should
seek a compromise between simple majority and unanimity.

Overall, the details of the JagCom rely on a number of assumptions 
that are arguably fragile and/or unrealistic.  This potential 
weakness needs to be acknowledged when making the case for the JagCom.
The JagCom is a legitimate proposal for the two-tier voting system of the Council of the EU,
whose finer details may profit from input by politicians in choosing a system 
judged to serve well the needs of the nearly 450 million residents of the 27 Member States
of the European Union without the United Kingdom.   Our closing quote (Machover \cite[Abs.]{Mach07})
accords a balanced responsibility to both theoreticians and politicians:
\lq\lq This is essentially a political matter; but a political decision
ought to be made in a theoretically enlightened way."

\appendix
\section{Circular voting}\label{ex:1}
In Definition \ref{defn2-1}, condition (ii)  is weaker than requiring 
that $X$ be exchangeable (see \cite[p.\ 324]{GS01}).
Here is a simple one-dimensional example of a random vector that is symmetric
but not exchangeable. Suppose the $m$ $(\ge 4)$
voters are distributed evenly  around a circular table. Let $Z_1,Z_2,\dots,Z_m$ 
be the outcomes of $m$ independent tosses of a fair coin that shows 
the values $\pm 1$. Let $X(i)$ be the majority value of 
$Z_{i-1}$, $Z_i$,  $Z_{i+1}$, with the convention that
$Z_{m+k}=Z_k$ for all $k$ (and a similar convention for the $X(i)$).  
It may be checked that $X(i)$ and $X(j)$  are independent if and only if 
$i$ and $j$ are distance $3$ or more away from each other. 
The joint distribution of $X$ is invariant under the rotation 
$i \mapsto i+1$ modulo $m$, and is hence
symmetric. 

Similar examples may be constructed
in two and higher dimensions. In models that incorporate
a spatial element in the relationships between individual voters, 
symmetry may be a reasonable assumption when exchangeability is not.

\section{\lq Ferromagnetic voting\rq}\label{A-ferro}
Kirsch \cite{Kir07} proposed studying the voting problem via the analogy of a ferromagnetic model, such as the
classical Ising model. He concentrated on the so-called Curie--Weiss (or mean-field) model, in which
each vertex $v$ of the complete graph has a random spin $\si_v$ taking values in $\{-1,1\}$
according to a certain probability distribution dictated by the so-called Ising model. 
The reader is referred to his paper for further details.

The analysis is especially simple in this so-called \lq mean-field' case since
the complete graph has the maximum of symmetry. 
The mean-field case is, in a sense familiar to mathematical physicists, an infinite-dimensional
model.  We note that similar results are fairly immediate for the more pertinent
finite-dimensional systems also, as follows. For concreteness, let $d \ge 1$ and let $\TT_n$ be the
$d$-dimensional torus obtained from  the square grid $\{0,1,\dots,n\}^d$ with periodic boundary conditions.
Let $\bc$ be the critical value of the inverse-temperature $\beta$ 
of the Ising model on $\ZZ^d$ (we refer to \cite{G-RC,G-pgs} for 
explanations of the model and notation). Interpreting $\si_v$ as the vote of an individual placed at 
the vertex $v$, the aggregate vote
$$
S = \sum_{v\in \TT_n} \si_v
$$
satisfies
\begin{equation}\label{eq:kirsch2}
\EE|S| \approx \begin{cases} n^{d/2} &\text{if } \be < \bc,\\
n^d &\text{if } \be>\bc.
\end{cases}
\end{equation} 
The bibliography associated with the Ising model and its ramifications is extended and complex,
and is directed mostly at the corresponding infinite-volume 
problem defined on the entire 
$d$-dimensional space $\ZZ^d$. Some of the above claims for periodic boundary conditions are
well known, and others may be derived from classical results.  
The relevant literature includes \cite{ADCS,CoxG84,Ons}.

\section{Gaussian approximation}\label{sec:4.2}
\SZ\  argue that the distribution function $F_{Z_j}$ in \eqref{eq:diff} is \lq roughly' Gaussian
with mean and variance given by \eqref{eq:sigma2}.
Motivated by the local central limit theorem 
for non-identically distributed random variables
(see \cite[p.\ 195]{GS01} and  \cite{GW}, or otherwise), 
we aspire to a Gaussian approximation of \eqref{eq:diff} of  the form
\begin{align}\label{eq:lclt}
\be_j &\approx \int_{qW-w_j}^{qW+w_j} \phi_{\si_j}(z)\, dz
\approx 2w_j\phi_{\si_j}(qW)\\
&= \frac{2w_j}{\sqrt{2\pi \si_j^2}}
\exp\left(-\frac{(qW)^2}{2\si_j^2}\right), \nonumber
\end{align}
where $\phi_\si$ is the density function of the $N(0,\si^2)$ Gaussian distribution.
(No sufficiently accurate estimate of the error in the first approximation of \eqref{eq:lclt} is currently available.)
This leads to the following approximation for the total influence of a voter in State $j$:
$$
I_j = \al_j \be_j \simeq \frac{C}{\sqrt{N_j}} \frac{2w_j}{\sqrt{\De^2-w_j^2}}
\exp\left(-\frac12\cdot\frac{(qW)^2}{(\De^2-w_j^2)}\right),
$$
where $C>0$ is an absolute constant, and 
$$
\De^2=\sum_{k=1}^s w_k^2.
$$
Let $\de=\Nmax/N$, and set $w_j=\sqrt{N_j}$.
(A similar anaysis is valid with $w_j$ set to the mean edge of Remark \ref{rem12}.)

\begin{letlist}
\item If we set $q=q^*$ as in \eqref{eq:q*}, 
we obtain the approximate inequalities
\begin{equation}\label{eq:approx}
\frac{2C}\De e^{-1/[2(1-\de)]} \preceq I_j=\al_j\be_j  \preceq \frac{2C}{\De\sqrt{1-\de}} e^{-1/2},
\qq j=1,2,\dots,s,
\end{equation}
with $\De=\sqrt N$. (The symbol $\preceq$ is used in order to indicate that
the inequalities are based on  the unquantified approximation \eqref{eq:lclt}.)
These bounds are independent of the choice of $j$, and are increasingly close to
one another in the limit as $\de\to 0$.
\item If, instead, we set $q=0$, we obtain the  inequalities \eqref{eq:approx}
with the exponential terms removed.
\end{letlist}
The exact numerical values of the $\be_j$ are calculated in Section \ref{sec:43} 
for the particular case
of the 27 Member States of the European Union post-Brexit.

The above analysis depends on two Assumptions:
\begin{numlist}
\item the normal (or Gaussian) approximation \eqref{eq:lclt} is reasonable,
\item the ratio $\de  = \Nmax/N$ is small.
\end{numlist}
Assumption 2 is unavoidable in some form, and its use within \eqref{eq:approx} is quantified therein. 
We therefore concentrate henceforth on Assumption 1. The approximation of
\eqref{eq:lclt} is a statement about a finite population, and thus one needs a rate
of convergence in the central limit theorem. The classical such result is as follows.

\begin{theorem}[Berry--Esseen \cite{Berry, Esseen,Shev}]\label{berry}
There exists $C\in[0.4906,0.5600]$ such that the following holds.
Let $X_1,X_2,\dots,X_s$ be independent random variables with
$$
\EE(X_i)=0, \q  
\EE(X_i^2)=t_i^2>0,
\q \EE (|X_i|^3)=\g_i<\oo,
$$ 
and write 
$$
\si^2=\sum_{i=1}^s t_i^2, \qq S=\frac1\si \sum_{i=1}^s X_i.
$$
Then
$$
\sup_{z\in\RR}\bigl|\PP(S \le z)-\Phi(z)\bigr| \le \frac C{\si^3} \sum_i \g_i,
$$
where $\Phi$ is the distribution function of the $N(0,1)$ distribution.
\end{theorem}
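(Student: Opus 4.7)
The plan is to follow the classical characteristic-function route, combined with Esseen's smoothing lemma and the sharp numerical refinements of Shevtsova, and to cite the corresponding literature for the precise values of $C$. For the lower bound $C \ge 0.4906$, I would invoke the extremal constructions originating with Esseen.

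First, assume without loss of generality that $\si=1$, so $S=\sum_i X_i$ has variance $1$. The main analytical tool is Esseen's smoothing inequality, which asserts that for any $T>0$,
\[
\sup_{z\in\RR}\bigl|F_S(z)-\Phi(z)\bigr| \le \frac{1}{\pi}\int_{-T}^T \left|\frac{\wh F_S(t)-e^{-t^2/2}}{t}\right|dt + \frac{C_1}{T},
\]
with $C_1=24/(\pi\sqrt{2\pi})$ in its classical form, but with a refined version due to Shevtsova that permits sharper numerical output. I would next estimate the characteristic function $\wh F_S(t)=\prod_i\EE(e^{itX_i})$ by Taylor expansion: for each $i$,
\[
\bigl|\EE(e^{itX_i})-\bigl(1-\tfrac12 t^2 t_i^2\bigr)\bigr| \le \tfrac16 |t|^3\g_i,
\]
which, combined with the telescoping identity $\prod a_i - \prod b_i = \sum_k (\prod_{i<k}a_i)(a_k-b_k)(\prod_{i>k}b_i)$ and the elementary bound $|e^{-t^2/2}-\prod_i(1-\tfrac12 t^2 t_i^2)| \le e^{-t^2/4}\cdot (\text{cubic remainder})$ valid on $|t|\le \eps/L$, leads to
\[
\bigl|\wh F_S(t)-e^{-t^2/2}\bigr|\le f(|t|)\,L,\qq L:=\sum_i\g_i,
\]
on a suitable range of $t$, where $f$ is an explicit, integrable function against $1/|t|$.

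Choosing $T\asymp 1/L$ and optimizing over the free parameters in $f$ then yields an inequality of the form $\sup_z|F_S(z)-\Phi(z)|\le C L$, which is the statement of the theorem. The main obstacle --- and the reason this result has been sharpened repeatedly over seven decades --- is extracting the specific numerical value $C\le 0.5600$. A crude implementation of the above scheme produces a much larger constant (Esseen's original $7.59$, Berry's $1.88$, etc.); reaching $0.5600$ requires a delicate truncation of the $X_i$, a second-order refinement of the smoothing inequality, and a careful quantitative comparison of the cumulant expansion of $\wh F_S$ with the Gaussian on the crucial range $|t|\lesssim 1/\sqrt{L}$. I would not reproduce these optimizations, but cite \cite{Shev} for the upper bound. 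For the lower bound $C\ge 0.4906$, I would exhibit (following the program initiated in \cite{Esseen}) a sequence of two-point distributions $X_i$ with $\PP(X_i=a)=p$, $\PP(X_i=-b)=1-p$ and parameters $(a,b,p,s)$ tuned so that $\sup_z|F_S(z)-\Phi(z)|\cdot (L)^{-1}$ approaches $0.4906\ldots$ from below; the Kolmogorov distance is explicitly computable at a lattice point where the discrete jump of $F_S$ straddles the Gaussian value, and optimization over $(a,b,p)$ produces the stated numerical lower bound.
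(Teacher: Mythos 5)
The paper offers no proof of this statement at all: it is quoted as a classical theorem with the attribution \cite{Berry, Esseen, Shev}, and the burden of the result (in particular the numerical constant) rests entirely on those citations. Your sketch is the standard and correct route to the qualitative inequality --- Esseen's smoothing lemma, a Taylor/telescoping comparison of $\wh F_S(t)=\prod_i\EE(e^{itX_i})$ with $e^{-t^2/2}$ on $|t|\le T$, and the choice $T\asymp 1/L$ with $L=\sum_i\g_i$ --- and you are right that this scheme, naively implemented, yields a constant far larger than $0.5600$; deferring the optimized constant to \cite{Shev} is exactly the level of rigour the paper itself adopts, so no gap is created relative to the source. Two small remarks. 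First, as the theorem is phrased (\lq\lq there exists $C\in[0.4906,0.5600]$ such that\dots"), only the upper bound $C\le 0.5600$ is logically required --- taking $C=0.5600$ satisfies the existential claim --- so your discussion of the lower bound via extremal two-point distributions, while correct in spirit (it is what pins the \emph{optimal} constant above $0.4906$ in the non-identically-distributed setting), is not needed to establish the statement. Second, be careful with the normalisation in your reduction: after setting $\si=1$ the Lyapunov ratio is $\sum_i\g_i/\si^3=\sum_i\g_i$, which is what your $L$ should denote throughout; this is consistent with what you wrote but worth stating explicitly, since the theorem's right-hand side carries the factor $\si^{-3}$.
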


Applying this to the random variable $Z_j$ of \eqref{eq:Z}, we obtain
\begin{align}\label{eq:Fapprox}
\sup_{z\in\RR}\bigl|F_{Z_j}(z)-\Phi_{\si_j}(z)\bigr| 
&= \sup_{z\in\RR}\bigl|\PP\left(Z_j/ \si_j \le z\right)-\Phi(z)\bigr|\\
&\le C \frac{\sum_{k\ne j}w_k^3}{\Bigl(\sum_{k\ne j}w_k^2\Bigr)^{3/2}},
\nonumber\end{align}
where $\si_j^2$ is given in \eqref{eq:sigma2}, and 
$\Phi_{\si}$ is the distribution function of the $N(0,\si^2)$ distribution. 
Therefore, by \eqref{eq:diff} (see \eqref{eq:lclt}),
\begin{equation}\label{eq:betabnd}
\left|\be_j - \int_{qW-w_j}^{qW+w_j} \phi_{\si_j}(z)\, dz\right| \le
2C \frac{\sum_{k\ne j}w_k^3}{\Bigl(\sum_{k\ne j}w_k^2\Bigr)^{3/2}},
\end{equation}
where  $2C\le 1.12$.

\begin{table}
\begin{center}
\definecolor{mygray}{gray}{0.9}
\newcolumntype{s}{>{\columncolor{mygray}} }
\setlength{\arrayrulewidth}{0.5mm}

\begin{tabular}{r |s l || c c sc|c c sc }
\rowcolor{mygray} \multicolumn{2}{c||}{Member State} & \multicolumn{3}{c|}{$q=0$} &
 \multicolumn{3}{c}{$q=q^*$}\\
\rowcolor{mygray}  $j$ && exact &  JagCom & BE interval  & exact & JagCom & BE interval\\
 \hline
 1 & Germany & 0.357&  0.379 &  $[0.03,0.70]$  &0.211& 0.205 &$ [-0.13,0.54]$\\
 3 & Italy  &  0.302& 0.319 & $[-0.03, 0.65]$ &0.183& 0.178 & $[-0.17,0.52]$ \\
 5 & Poland   & 0.233& 0.244 & $[-0.11,0.59]$  &0.144& 0.141 & $[-0.21,0.49]$ \\
 7 & Netherlands  &0.155& 0.160 & $[-0.19,0.50]$   &0.097& 0.095  &  $[-0.25,0.44]$\\
 9 & Greece   &0.122& 0.126 & $[-0.22,0.47]$ &0.077& 0.075  &  $[-0.27,0.42]$ \\
\hline
11 & Portugal  & 0.119& 0.123 & $[-0.22,0.46]$ &0.075& 0.074  & $[-0.27,0.41]$ \\
13 & Hungary  & 0.116& 0.120 & $[-0.22,0.46]$  &0.073&  0.072  & $[-0.27,0.41]$ \\
15 & Bulgaria  & 0.099& 0.102 & $[-0.24,0.44]$  &0.062& 0.061  & $[-0.28,0.40]$ \\
17 & Finland & 0.086& 0.089 &  $[-0.25,0.43]$  &0.055&  0.054  &  $[-0.28,0.39]$ \\
19 & Ireland   &0.080& 0.082 &  $[-0.26,0.42]$  &0.050& 0.050  &  $[-0.29,0.39]$ \\
\hline
21 & Lithuania  &  0.063& 0.064 & $[-0.27,0.40]$  &0.040& 0.039 & $[-0.30,0.37]$ \\
23 & Latvia   & 0.052& 0.053 & $[-0.28,0.39]$  &0.033& 0.032 & $[-0.30,0.37]$\\
25 & Cyprus  & 0.034& 0.035 & $[-0.30,0.37]$  &0.021& 0.021 & $[-0.31,0.36]$ \\
27 & Malta  & 0.024& 0.025 & $[-0.31,0.36]$ &0.015& 0.015 & $[-0.32,0.35]$ \\
\hline
\end{tabular}
\end{center}
\vskip 12pt
\caption{Numerical data for odd values of $j$ and the quotas $q=0$ and $q^*$. Three values are given
in each case: the exact value of the State-influence $\be_j$ as in Table \ref{table:1}
(labelled \emph{exact}), the approximation \eqref{eq:app} (labelled \emph{JagCom}),
and the rigorous interval for the State-influence calculated by the Berry--Esseen bound \eqref{eq:betabnd}
(labelled \emph{BE interval}). Note the close agreement between the exact values and the 
JagCom approximations in all cases, and especially so when $q=q^*$. 
The rigorous Berry--Esseen bound is insufficient to rule
out even $\be_j=0$ except in the unique case $q=q^*$, $j=1$.
The calculations involving the Gaussian distribution have been performed using Microsoft Excel.}
\label{table:2}
\end{table}

\begin{example}[EU27]\label{rem5}
Suppose $s=27$ and the State populations $N_1,N_2,\dots,N_{27}$ are the QMV2017 figures for
the Member States of the EU, as in \cite[Table 1]{PukG17}. We write
$N_1> N_2>\dots>N_{27}$, so that $\Nmax=N_1$, and we choose 
$w_j = \sqrt{N_j}$ and $q=q^*$ with
$q^*$ as in \eqref{eq:q*}.

The integral on the left side of \eqref{eq:betabnd} may be expressed as
\begin{equation}\label{eq:Gint}
\int_{(\sqrt N-w_j)/\sqrt{ N-N_j}}^{(\sqrt N+w_j)/\sqrt {N-N_j}} \phi(z)\, dz,
\end{equation}
where $\phi=\phi_0$. Its numerical value decreases monotonically from $0.207$ (when $j=1$) to $0.015$ (when $j=27$). 
The Berry--Esseen bound on the right side of \eqref{eq:betabnd} takes the 
value $0.332$ (when $j=1$), $0.349$ (when
$j=5$), and $0.334$ (when $j=27$), and is monotone on each of the two intervals 
$j\in[1,5]$ and $j\in [5,27]$. The bounds are too large to yield useful information about the $\be_j$, and thus they cannot be estimated using the Berry--Esseen bound.  
In contrast, the values of the integral in \eqref{eq:Gint} are notably close to the exact values of Table \ref{table:1}.
A similar analysis is valid when $q=0$.
This numerical information is summarised in Table \ref{table:2}.

We emphasise that the above observations do not invalidate the JagCom. 
Preferable to the Berry--Esseen bound would be a sufficiently precise rate of convergence
in the local central limit theorem for discrete, non-identically distributed random variables. 
We are unaware of such a result.
\end{example}

We return to the question of the accuracy of the two approximations in \eqref{eq:lclt}
(see also \eqref{eq:app}). 
Although this has not been investigated systematically in the current work, exploratory numerical
calculations have been performed with a Union of 20 States, in which $\Nmax/N$ is approximately
that of EU27 with QMV2017. All such calculations indicate that the left side of \eqref{eq:betabnd} is 
only  rarely greater than $10^{-2}$, and this is positive evidence for the Gaussian approximation
with general population figures comparable to those of EU27. 

The approximation \eqref{eq:app} amounts to a further approximation around $q^*$,
based upon the near-linearity of the
Gaussian density function near a point of inflection. 
The empirical evidence indicates, as above, that the discrepancy is generally less than $10^{-2}$.

We conclude this appendix as follows. \emph{No mathematical proof is known
of the optimality of the choice \eqref{eq:q*} of the quota $q^*$ in the JagCom
for general population distributions.}
Even if a rate can be proved in the appropriate central limit theorem, 
it is unlikely to be sufficiently tight to justify the choice $q^*$. Numerical data is,
however, positive.

\section*{Acknowledgements} 
This work was supported in part
by the Engineering and Physical Sciences Research Council under grant EP/I03372X/1.
It was conducted partly during a visit to the Statistics Department
of the University of California at Berkeley, and was completed during a visit
to Keio University, Japan. Friedrich Pukelsheim 
has kindly made a number of valuable suggestions for this work, 
and has provided some further references.
Aernout van Enter has advised
on the bibliography of the Ising model. The author thanks Wojciech S\l omczy\'nski
for his comments on the history of the JagCom and for drawing his attention to
a number of additional references. Thanks are due to the three referees, for their
careful readings and their insightful and constructive comments. 

\providecommand{\bysame}{\leavevmode\hbox to3em{\hrulefill}\thinspace}
\providecommand{\MR}{\relax\ifhmode\unskip\space\fi MR }
\providecommand{\MRhref}[2]{%
  \href{http://www.ams.org/mathscinet-getitem?mr=#1}{#2}
}
\providecommand{\href}[2]{#2}

\end{document}